\newcommand{\rsp}{\raisebox{0em}[2.7ex][1.3ex]{\rule{0em}{2ex} }}
\newcommand{\Q}{{\mathbb Q}}
\newcommand{\Z}{{\mathbb Z}}
\newcommand{\OO}{\mathcal O}
\newcommand{\fZ}{\mathfrak 2}
\newcommand{\fp}{\mathfrak p}
\newcommand{\fP}{\mathfrak P}
\newcommand{\fA}{\mathfrak A}
\newcommand{\fr}{\mathfrak r}
\newcommand{\frb}{\widetilde{\fr}}
\newcommand{\fR}{\mathfrak R}
\newcommand{\fq}{\mathfrak q}
\newcommand{\disc}{\operatorname{disc}}
\newcommand{\rank}{\operatorname{rank}}
\newcommand{\gen}{{\operatorname{gen}}}
\newcommand{\Cl}{\operatorname{Cl}}
\newcommand{\Gal}{\operatorname{Gal}}
\newcommand{\Aut}{\operatorname{Aut}}
\newcommand{\GL}{\operatorname{GL}}
\newcommand{\im}{\operatorname{im}\,}
\newcommand{\eps}{\varepsilon}
\newcommand{\la}{\langle}
\newcommand{\ra}{\rangle}
\newcommand{\lra}{\longrightarrow}
\newcommand{\ts}{\textstyle}
\newcommand{\ab}{\operatorname{ab}}
\newtheorem{thm}{Theorem}
\newtheorem{prop}[thm]{Proposition}
\newtheorem{lem}[thm]{Lemma}
\theoremstyle{definition}
\begin{document}
\title{On $2$-Class field towers of some imaginary 
	quadratic number fields}
\author{Franz Lemmermeyer}
\address{M\"orikeweg 1, 73489 Jagstzell, Germany}
\email{hb3@ix.urz.uni-heidelberg.de}

\abstract 
We construct an infinite family of imaginary quadratic number 
fields with $2$-class groups of type $(2,2,2)$ whose Hilbert
$2$-class fields are finite.
\endabstract

\maketitle

\section{Introduction}
Let $k$ be an imaginary quadratic number field. It has been 
known for quite a while that the $2$-class field tower of an 
imaginary quadratic number field is infinite if 
$\rank \Cl_2(k) \ge 5$, but it is not known how far from best 
possible this bound is. F.~Hajir \cite{H} has shown that
the $2$-class field tower is infinite if $\Cl(k) \supseteq (4,4,4)$,
and again we do not know if this result can be improved. In this 
article we will study the $2$-class field towers of a family of 
quadratic fields whose $2$-class groups have rank $3$. 

For quadratic discriminants $d$, let $\Cl_2(d)$ and $h_2(d)$
denote the $2$-class group and  the $2$-class number
of $\Q(\sqrt{d}\,)$, respectively. The fundamental unit
of $\Q(\sqrt{m}\,)$ will be denoted by $\eps_m$, whether
$m$ is a discriminant or not.
A factorization $d = d_1\cdot d_2$ of a discriminant $d$ into two 
coprime discriminants $d_1$, $d_2$ is called a $C_4$-factorization
if $(d_1/p_2) = (d_2/p_1) = + 1$ for all primes $p_1 \mid d_1$,
$p_2 \mid d_2$. A $D_4$-factorization of $d$ is a factorization
$d = (d_1\cdot d_2)\cdot d_3$ such that $d_1\cdot d_2$ is a
$C_4$-factorization. Finally, $d = d_1\cdot d_2\cdot d_3$ is called
a $H_8$-factorization if $(d_1d_2/p_3) = (d_2d_3/p_1) = 
 (d_3d_1/p_2) = +1$ for all primes $p_j \mid d_j$, $j = 1, 2, 3$. 
It is known (cf. \cite{Lem}) that $d$ admits a $G$-factorization 
($G \in \{C_4, D_4\}$)
if and only if $k = \Q(\sqrt{d}\,)$ admits an extension $K/k$ 
which is unramified outside $\infty$, normal over $\Q$, and 
which has Galois group $\Gal(K/k) \simeq G$. 

Let $F^1$ denote the $2$-class field of a number field $F$, and
put $F^2 = (F^1)^1$. Then in our case genus theory shows that
$d = \disc k = d_1d_2d_3d_4$ is the product of exactly four prime
discriminants, and moreover we have $k_\gen = 
\Q(\sqrt{d_1}, \sqrt{d_2}, \sqrt{d_3}, \sqrt{d_4}\,)$.

Our aim is to prove the following

\begin{thm}\label{main}
Let $k = \Q(\sqrt{d}\,)$ be an imaginary quadratic number field 
with discriminant $d = -4pqq'$, where $p \equiv 5 \bmod 8$,
$q \equiv 3 \bmod 8$ and $q' \equiv 7 \bmod 8$ are primes such
that $(q/p) = (q'/p) = -1$. Then $\Gamma_n = \Gal(k^2/k)$ is 
given by
$$ \begin{array}{rl}
   \Gamma_n = \la \rho, \sigma, \tau: & 
   \rho^4 = \sigma^{2^{n+1}} = \tau^4 = 1,
	\rho^2 = \sigma^{2^n}\tau^2, \\
	& [\sigma,\tau] = 1, [\rho,\sigma] = \sigma^2,
	[\rho,\tau] = \sigma^{2^n} \tau^{2} \ra, 
  \end{array}$$
where $n$ is determined by $\Cl_2(-qq') = (2,2^n)$.
Moreover, $\Cl_2(k) \simeq (2,2,2)$ and 
$\Cl_2(k^1) \simeq (2,2^n)$; for results about class and unit 
groups of subfields of $k^1/k$ see the proof below.
\end{thm}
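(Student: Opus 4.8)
The plan is to compute $\Gamma_n=\Gal(k^2/k)$ in three stages: determine $\Cl_2(k)$ and hence $k^1$ by genus theory; compute the $2$-class groups and unit groups of \emph{all} fields between $k$ and $k^1$ (this is where the parameter $n$ and essentially all of the arithmetic enter); and finally reconstruct the metabelian group $\Gamma_n$ from these invariants together with the transfer maps. For Step~1, note that since $p\equiv1\bmod4$ and $q,q'\equiv3\bmod4$, the discriminant $d=-4pqq'$ is the product of exactly the four prime discriminants $-4,\ p,\ -q,\ -q'$, so genus theory gives $\rank\Cl_2(k)=3$ and $k_\gen=\Q(i,\sqrt p,\sqrt q,\sqrt{q'})$. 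To upgrade this to $\Cl_2(k)\simeq(2,2,2)$ I would write down the R\'edei matrix $R\in M_4(\F_2)$ of these four prime discriminants; using $p\equiv5$, $q\equiv3$, $q'\equiv7\bmod8$ together with $(q/p)=(q'/p)=-1$ (hence also $(p/q)=(p/q')=-1$ by reciprocity) one checks that $R$ has $\F_2$-rank $3$ — the two sub-cases $(q/q')=\pm1$ give different matrices, but both of rank $3$ — so the $4$-rank of $\Cl_2(k)$ equals $4-1-3=0$ and $\Cl_2(k)\simeq(2,2,2)$. Comparing degrees then forces $k^1=k_\gen=\Q(i,\sqrt p,\sqrt q,\sqrt{q'})$, a $(2,2,2)$-extension of $k$.

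For Step~2, one works inside the $(2,2,2)$-extension $k^1/k$, whose proper intermediate fields are $7$ quadratic and $7$ quartic extensions $K$ of $k$, each \emph{unramified} over $k$. For the quadratic $K$ I would invoke Chevalley's ambiguous class number formula: since $E_k=\{\pm1\}$ and $k$ is imaginary, $|\Cl_2(K)^{\Gal(K/k)}|=h_2(k)\big/\bigl(2\,[E_k:E_k\cap N_{K/k}K^\times]\bigr)\in\{2,4\}$, the value being governed by whether $-1$ is a norm from $K$, which in turn is read off from the splitting of $2,p,q,q'$ and the symbols $(q/p),(q'/p),(q/q')$. For the quartic $K$ I would apply Kuroda's class number formula, expressing $h_2(K)$ through the $2$-class numbers of its three quadratic subfields and a unit index $q(K)\in\{1,2,4\}$; evaluating these indices needs the classical facts $N\eps_p=-1$, $N\eps_q=N\eps_{q'}=+1$ and the behaviour of $\eps_{pq},\eps_{pq'},\eps_{qq'},\eps_{pqq'}$. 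Applying Kuroda once more computes $h_2(k^1)$, and the norm maps between the various $\Cl_2(K)$ together with the ambiguous-class information pin down the group structure; the cyclic part traces back to $\Q(\sqrt{-qq'})$, whose $2$-class number is $2^{n+1}$, yielding $\Cl_2(k^1)\simeq(2,2^n)$. The list of $\Cl_2(K)$ and $E_K$ produced here is the data promised in the statement.

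For Step~3, reconstruct $\Gamma=\Gal(k^2/k)$. It is metabelian, since $k^2/k^1$ is abelian; moreover $\Gamma/\Gamma'\simeq\Cl_2(k)\simeq(2,2,2)$, and $\Gamma'=\Gal(k^2/k^1)\simeq\Cl_2(k^1)\simeq(2,2^n)$ because $k^2$ is the $2$-class field of $k^1$. Writing $\Gamma=\la\rho,\sigma,\tau\ra$ with $\rho^2,\sigma^2,\tau^2\in\Gamma'$, I would determine (i) the action of $\Gamma/\Gamma'$ on the abelian group $\Gamma'$ — equivalently, which $2$-classes of $k^1$ are fixed by which automorphisms of $k^1/k$ — to obtain $[\sigma,\tau]=1$, $[\rho,\sigma]=\sigma^2$, $[\rho,\tau]=\sigma^{2^n}\tau^2$; (ii) the isomorphism type of $M/M'$ for each of the $7$ maximal subgroups $M\le\Gamma$, matched against the $\Cl_2(K)$ computed in Step~2 for the corresponding quadratic $K$, to fix $\rho^4=\sigma^{2^{n+1}}=\tau^4=1$ and $\rho^2=\sigma^{2^n}\tau^2$; and (iii) the transfers $\Gamma/\Gamma'\to M/M'$, which by Artin reciprocity are the capitulation maps $\Cl_2(k)\to\Cl_2(K)$, to eliminate the remaining candidates. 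Finally one checks that $\Gamma_n$ as presented has order $2^{n+4}$, possesses all of these invariants, and is the unique metabelian $2$-group with abelianization $(2,2,2)$ that does so.

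The main obstacle is Step~2: the unit-index calculations $q(K)$ and the companion norm-residue conditions in Chevalley's formula must be carried through uniformly in the two cases $(q/q')=\pm1$ and without any information on the sizes of $\Cl_2(\Q(\sqrt{pq}))$, $\Cl_2(\Q(\sqrt{pq'}))$, and so on beyond what the given congruences impose, and keeping $\Cl_2(\Q(\sqrt{-qq'}))=(2,2^n)$ as the only undetermined parameter. The other delicate point is the uniqueness claim in Step~3 — that the accumulated local data \emph{forces} the group $\Gamma_n$ rather than merely being consistent with it.
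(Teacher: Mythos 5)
Your overall architecture (genus theory $\to$ arithmetic of the intermediate fields $\to$ reconstruction of the metabelian group) matches the paper's in outline, and Step 1 is fine (the paper argues via the absence of $C_4$-factorizations rather than the R\'edei matrix, but these are equivalent). However, the two points you flag as ``delicate'' at the end are not merely delicate -- they are where the entire content of the proof lives, and your toolkit does not close them. First, the arithmetic heart of the paper is the determination of the \emph{structure} $\Cl_2(k_4)\simeq(4,2^{n+1})$ for $k_4=\Q(\sqrt p,\sqrt{-qq'})$. Chevalley's ambiguous class number formula and Kuroda's formula give you $|\Cl_2(K)^{\Gal(K/k)}|$ and $h_2(k_4)=2^{n+3}$, but with $2$-rank $2$ this leaves several candidate structures ($(2,2^{n+2})$, $(4,2^{n+1})$, \dots), and ``norm maps together with ambiguous-class information'' do not decide among them. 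The paper needs a bespoke ideal-theoretic argument: the relation $\fP^2\sim\widetilde{\fp}$ obtained from the group-ring identity $2+(1+s+t+st)=(1+s)+(1+t)+(1+st)$, the cyclic subgroup of order $2^n$ imported from $\Cl_2(\Q(\sqrt{-qq'}))$ via Lemma \ref{L3}, and the construction of an auxiliary ideal $\fA$ with $N_F\fA\sim\fZ$ and $\fA^2\sim\fq$, which produces the element of order exactly $4$. Nothing in your Step 2 supplies these ideas. Relatedly, your route to $\Cl_2(k^1)\simeq(2,2^n)$ (``Kuroda once more \dots the cyclic part traces back to $\Q(\sqrt{-qq'})$'') is an assertion, not an argument; the paper instead gets $\Cl_2(k^1)$ as a corollary of the presentation, which in turn rests on showing $k^2=k_4^1$ via Prop.~4 of \cite{BLS} and the explicit computation $h_2(\Q(i,\sqrt p,\sqrt{qq'}))=\frac12 h_2(k_4)$ -- a step absent from your plan.

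Second, your Step 3 rests on the claim that $\Gamma_n$ is the \emph{unique} metabelian $2$-group with the accumulated invariants (abelianizations of maximal subgroups, transfer kernels, action on $\Gamma'$). This is not a standard fact, no argument is offered, and such invariant lists are in general not complete for metabelian $2$-groups; proving uniqueness here would be a substantial group-theoretic project in its own right. The paper sidesteps the issue entirely by \emph{defining} the generators as Artin symbols of explicit ideals, $\sigma=\big(\frac{L/K}{\fP}\big)$, $\tau=\big(\frac{L/K}{\fA}\big)$, $\rho=\big(\frac{L/k}{\fZ}\big)$, and reading off each relation from an already-established ideal-class relation: $\fZ\sim\fA^2\fP^{2^n}$ gives $\rho^2=\sigma^{2^n}\tau^2$, $\fP^{1+\rho}=\fp\sim1$ gives $[\rho,\sigma]=\sigma^{\pm2}$, and $\fA^{1+\rho}\sim\fZ\fq$ gives the last relation. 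I would recommend restructuring your argument along these lines: prove the structure of $\Cl_2(k_4)$ first (this is where the hypotheses $(q/p)=(q'/p)=-1$, $p\equiv5$, $q\equiv3$, $q'\equiv7\bmod8$ actually get used), show the $2$-class field tower of $k_4$ is abelian, and then compute the relations of $\Gamma_n$ directly via Artin symbols rather than by an a posteriori classification.
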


We note a few relations which have proved to be useful for 
computing in $\Gamma_n$:
$(\rho\sigma)^2 = \rho^2$, 
$(\rho\sigma\tau)^2 = (\rho\tau)^2 = \tau^2$,
$[\rho,\tau^2] = 1$. 

\section{Preliminary Results on Quadratic Fields}

We will use a few results on $2$-class groups of quadratic
number fields which can easily be deduced from genus theory.
We put $m = pqq'$, where $p, q, q'$ satisfy the conditions
in Thm. \ref{main}, and set $k = \Q(\sqrt{-m}\,)$. 

\begin{lem}
The ideal classes of  $\fZ = (2,1 + \sqrt{-m}\,)$, 
$\fp = (p,\sqrt{-m}\,)$ and $\fq = (q,\sqrt{-m}\,)$ generate 
$\Cl_2(k) \simeq (2,2,2)$.
\end{lem}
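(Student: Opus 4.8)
The plan is to use genus theory directly. Since $d = \disc k = -4pqq'$ splits into exactly four prime discriminants — say $d_1 = -4$, $d_2 = p^* = p$ (as $p\equiv 1\bmod 4$), $d_3 = -q$ (as $q\equiv 3\bmod 4$) and $d_4 = -q'$ — genus theory gives $\Cl_2(k)/\Cl_2(k)^2 \simeq (2,2,2)$, so the $2$-rank of $\Cl_2(k)$ is exactly $3$. Thus I first record that $k_\gen = \Q(\sqrt{-4},\sqrt p,\sqrt{-q},\sqrt{-q'}\,)$ has degree $8$ over $k$, and the three ambiguous prime ideals above the ramified primes $2$, $p$, $q$ (namely $\fZ$, $\fp$, $\fq$) together with the one above $q'$ generate the ambiguous class group, which coincides with $\Cl_2(k)/\Cl_2(k)^2$ here. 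The single relation among the four classes $[\fZ],[\fp],[\fq],[\fq']$ comes from the principal ideal $(\sqrt{-m}) = \fZ\cdot\fp\cdot\fq\cdot\fq'$ (up to squares), so any three of them generate; I choose $[\fZ],[\fp],[\fq]$.

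Next I must show these three classes are independent modulo squares, i.e. none of $\fZ$, $\fp$, $\fq$, $\fZ\fp$, $\fZ\fq$, $\fp\fq$, $\fZ\fp\fq$ lies in $\Cl_2(k)^2$. By class field theory a product of these ambiguous ideals is a square in the class group iff the corresponding quadratic subextension of $k_\gen/k$ is ramified — equivalently, iff the relevant norm condition fails. Concretely, $[\fp]\in\Cl_2(k)^2$ would force $p$ (or $-p$) to be a norm from $\Q(\sqrt{-m}\,)$, which genus theory detects via Hilbert symbols / the quadratic residue conditions $(q/p)$, $(q'/p)$; this is exactly where the hypotheses $p\equiv 5\bmod 8$, $q\equiv 3\bmod 8$, $q'\equiv 7\bmod 8$ and $(q/p)=(q'/p)=-1$ enter, ruling out each of the seven nontrivial products. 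In practice one checks that the $4\times 4$ matrix of Legendre/Kronecker symbols $(d_i/p_j)$ (the "genus matrix" governing how $k_\gen$ decomposes) has the right rank, so that the subgroup of squares in the $(2,2,2,2)/(\text{relation})$ has the expected size; the hypotheses are tailored to make this computation come out.

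The main obstacle is bookkeeping rather than conceptual: correctly identifying which product of $\{\fZ,\fp,\fq,\fq'\}$ corresponds to which quadratic subfield of $k_\gen$, and then evaluating the associated Hilbert symbols at $2$ (where $p\equiv 5\bmod 8$ versus $q\equiv 3$, $q'\equiv 7$ matters) and at $p$, $q$, $q'$. I would organize this as a single table: rows indexed by the seven nontrivial subsets, columns by the ramified primes $2,p,q,q'$, entries the local norm obstruction, and check each row is nonzero. Once that table is in place the lemma is immediate: the three listed classes are independent of order $2$ each (each $\fZ^2$, $\fp^2$, $\fq^2$ is principal since these ideals have prime norm and $\pm 2,\pm p,\pm q$ are not represented primitively — again genus theory), so they generate a subgroup $\simeq(2,2,2)$, which by the rank count is all of $\Cl_2(k)$.
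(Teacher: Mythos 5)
Your overall strategy --- genus theory for the $2$-rank, then a check that the seven nontrivial products of $[\fZ],[\fp],[\fq]$ are nontrivial modulo squares --- can be made to work, but as written it has three genuine problems. First, your criterion for detecting squares is misstated: you say a product of ambiguous ideals is a square iff ``the corresponding quadratic subextension of $k_\gen/k$ is ramified,'' but every subextension of $k_\gen/k$ is unramified by construction of the genus field. The correct criterion is that $[\fA]\in\Cl_2(k)^2$ iff the Artin symbol of $\fA$ in $\Gal(k_\gen/k)$ is trivial, i.e.\ iff all genus characters $(d_i/N\fA)$ equal $+1$; similarly, $[\fp]\in\Cl_2(k)^2$ is not the statement that $\pm p$ is a norm from $k$ (that would concern principality). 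Second, the decisive content of the lemma is exactly the table of symbols you announce but do not fill in; ``one checks that the matrix has the right rank'' is the whole proof, not a remark, and it is precisely where the hypotheses $p\equiv 5$, $q\equiv 3$, $q'\equiv 7\bmod 8$ and $(q/p)=(q'/p)=-1$ must actually be used. Third, your closing inference is a non sequitur: a subgroup isomorphic to $(2,2,2)$ inside a $2$-group of $2$-rank $3$ need not be the whole group (consider $(2,2,4)\supset(2,2,2)$). What would save you is the unperformed table itself: if the three classes are independent modulo $\Cl_2(k)^2$, then by the Burnside basis theorem they generate $\Cl_2(k)$, and since each has order $2$ the group is elementary abelian of rank $3$; you need to argue this way rather than appeal to the rank count. (Relatedly, your parenthetical reason that each class has order $2$ is scrambled: $\fZ^2=(2)$, $\fp^2=(p)$, $\fq^2=(q)$ are principal simply because these primes ramify; the non-representability statements show the ideals themselves are non-principal.)

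For comparison, the paper disposes of both points far more cheaply. The absence of $C_4$-factorizations of $d$, checked directly from the stated congruences and residue symbols, forces the $4$-rank of $\Cl(k)$ to vanish (R\'edei--Reichardt), so $\Cl_2(k)\simeq(2,2,2)$. The nontriviality and independence of $[\fZ],[\fp],[\fq]$ then follow from the elementary observation that an ideal of $\Q(\sqrt{-m}\,)$ whose norm is less than $m$ and not a perfect square cannot be principal, since the norm form $x^2+my^2$ represents no such integer; this applies to all seven products, whose norms are $2,p,q,2p,2q,pq,2pq<m$. This avoids the genus-character bookkeeping entirely, and you may want to adopt it in place of the table.
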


\begin{proof}
Since $(q/p) = (q/p') = -1$ and $p \equiv 5 \bmod 8$, there are
no $C_4$-factorizations of $d$, and this shows that
$\Cl_2(k) \simeq (2,2,2)$. The ideal classes generated by
$\fZ$, $\fp$ and $\fq$ are non-trivial simply because
their norm is smaller than $m$; the same reasoning
shows that they are independent.
\end{proof}

\begin{lem}\label{L3}
The quadratic number field $\widetilde{k} = \Q(\sqrt{-qq'}\,)$ has 
$2$-class group $\Cl_2(\widetilde{k}) \simeq (2,2^n)$ for some 
$n \ge 1$; it is generated by the prime ideals  
$\fZ = (2,1+\sqrt{-qq'}\,)$ above $2$ and a
prime ideal $\widetilde{\fp}$ above $p$.  Moreover, the ideal 
classes of $\fZ$ and $\fq = (q, \sqrt{-qq'}\,)$ generate 
a subgroup $C_2$ of type $(2,2)$; we have $n \ge 2$ if and 
only if $(q'/q) = -1$, and in this case, 
the square class in $C_2$ is $[\fZ\fq]$.
\end{lem}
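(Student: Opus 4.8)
The plan is to read off every assertion from genus theory, the norm form of $\widetilde k$, and a single genus character. Since $q\equiv3$ and $q'\equiv7\bmod8$ give $qq'\equiv5\bmod8$, we have $-qq'\equiv3\bmod4$, so $\OO_{\widetilde k}=\Z[\sqrt{-qq'}\,]$ has norm form $x^2+qq'y^2$ and $\disc\widetilde k=-4qq'=(-4)(-q)(-q')$ is a product of exactly three prime discriminants; hence $\widetilde k_\gen=\Q(\sqrt{-1},\sqrt q,\sqrt{q'}\,)$, the group $\Cl_2(\widetilde k)$ has $2$-rank $2$, so $\Cl_2(\widetilde k)\simeq(2^a,2^b)$ with $1\le a\le b$ and $|\Cl_2(\widetilde k)[2]|=4$. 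The norm form represents none of $2$, $q$, $2q$, while $\fZ^2=(2)$, $\fq^2=(q)$ and $(\fZ\fq)^2=(2q)$; thus $[\fZ]$, $[\fq]$, $[\fZ\fq]$ are nontrivial of order $2$ and hence pairwise distinct, so $C_2:=\la[\fZ],[\fq]\ra\simeq(2,2)$, and since $|C_2|=4=|\Cl_2(\widetilde k)[2]|$ we get $C_2=\Cl_2(\widetilde k)[2]$.

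Next I would show $[\fZ]\notin\Cl_2(\widetilde k)^2$. The reduced form $2x^2+2xy+\tfrac{qq'+1}{2}y^2$ of discriminant $-4qq'$ represents the class of $\fZ$ and (primitively) represents $m:=\tfrac{qq'+1}{2}$, which is odd, prime to $qq'$, and $\equiv3\bmod4$ (as $qq'\equiv5\bmod8$); hence the genus character $\chi$ of $\widetilde k$ attached to the prime discriminant $-4$ takes the value $\chi([\fZ])=\big(\tfrac{-1}{m}\big)=-1$, and likewise $\chi([\fq])=\big(\tfrac{-1}{q}\big)=-1$. A short computation in $(2^a,2^b)$ shows that once $a\ge2$ every element of order $2$ is a square; as $[\fZ]$ is an order-$2$ nonsquare, this forces $a=1$, so $\Cl_2(\widetilde k)\simeq(2,2^n)$ for some $n\ge1$. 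Then $\Cl_2(\widetilde k)^2$ is cyclic, so $\Cl_2(\widetilde k)^2\cap C_2=\Cl_2(\widetilde k)^2[2]$ is trivial if $n=1$ and of order $2$ if $n\ge2$; since neither $[\fZ]$ nor $[\fq]$ is a square, in the case $n\ge2$ the nontrivial square class of $C_2$ must be $[\fZ\fq]$.

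For the generators, $\big(\tfrac{-qq'}{p}\big)=\big(\tfrac{-1}{p}\big)(q/p)(q'/p)=1$ (using $p\equiv1\bmod4$ and $(q/p)=(q'/p)=-1$), so $p$ splits, $(p)=\widetilde{\fp}\widetilde{\fp}'$. As $\Cl_2(\widetilde k)$ is a $2$-group, one has $\la[\fZ],[\widetilde{\fp}]\ra=\Cl_2(\widetilde k)$ as soon as $[\fZ]$, $[\widetilde{\fp}]$ and $[\fZ\widetilde{\fp}]$ are all nonsquares; this I would verify with the genus character $\chi$ above, which gives $\chi([\widetilde{\fp}])=\big(\tfrac{-1}{p}\big)=1$ and hence $\chi([\fZ\widetilde{\fp}])=-1$, together with the genus character attached to $-q$, which gives $\big(\tfrac{-q}{p}\big)=\big(\tfrac{-1}{p}\big)(q/p)=-1$ on $[\widetilde{\fp}]$.

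It remains to prove that $n\ge2$ if and only if $(q'/q)=-1$, and here I would use the criterion recalled in the introduction. Indeed $n\ge2$ says precisely that $\Cl_2(\widetilde k)$ has a quotient of order $4$, i.e.\ that $\widetilde k$ has an extension unramified outside $\infty$ with Galois group $C_4$; such an extension is automatically normal over $\Q$, because complex conjugation acts as $-1$ on $\Cl(\widetilde k)$ and therefore fixes every subgroup. So $n\ge2$ is equivalent to $\disc\widetilde k=-4qq'$ admitting a $C_4$-factorization. Of the three factorizations into coprime discriminants, $(-4)(qq')$ and $(-q)(4q')$ violate the $C_4$-conditions because $\big(\tfrac{-4}{q}\big)=-1$ and $\big(\tfrac{-q}{2}\big)=-1$ (note $-q\equiv5\bmod8$), whereas for $(-q')(4q)$ the condition $\big(\tfrac{-q'}{2}\big)=1$ is automatic ($-q'\equiv1\bmod8$) and the remaining requirements $\big(\tfrac{-q'}{q}\big)=\big(\tfrac{4q}{q'}\big)=+1$ hold exactly when $(q'/q)=-1$ (equivalently $(q/q')=+1$). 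This gives the dichotomy. The main obstacle is the sign bookkeeping in the genus-theoretic steps, above all evaluating $\chi$ at the ramified prime $2$ --- which is why I route it through the form $2x^2+2xy+\tfrac{qq'+1}{2}y^2$ and the congruence $\tfrac{qq'+1}{2}\equiv3\bmod4$ --- together with the small but essential observation that the $C_4$-extension is automatically normal over $\Q$ since $\widetilde k$ is imaginary quadratic; everything else is routine.
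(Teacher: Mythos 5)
Your proof is correct and rests on the same machinery as the paper's: genus theory for the $2$-rank, genus-character (equivalently, principal-genus) computations to locate the non-squares among $[\fZ]$, $[\fq]$, $[\widetilde{\fp}]$, and the $C_4$-factorization criterion to decide when $n\ge 2$. The only differences are expository --- you make explicit several steps the paper leaves implicit (why the $4$-rank is at most $1$ via $\chi_{-4}([\fZ])=-1$, the Burnside-basis argument for generation, and the automatic normality over $\Q$ of the unramified $C_4$-extension), whereas the paper instead checks directly that all four classes $[\widetilde{\fp}]$, $[\fZ\widetilde{\fp}]$, $[\fq\widetilde{\fp}]$, $[\fZ\fq\widetilde{\fp}]$ are non-squares to conclude that $[\widetilde{\fp}]$ has order $2^n$.
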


\begin{proof}
Since $q \equiv 3 \bmod 8$, the only possible $C_4$-factorization
is $d = -q' \cdot 4q$; this is a  $C_4$-factorization if and only
if $(-q'/q) = +1$. 
In this case, exactly one of the ideal classes $[\fZ]$,
$[\fq]$ and $[\fZ\fq]$ is a square; by genus theory,
this must be $[\fZ\fq]$.
Finally, let $\widetilde{\fp}$ denote a prime ideal above $p$. Then
$[\,\widetilde{\fp}\,]$ is no square in $\Cl_2(\widetilde{k})$ because 
$(-q/p) = -1$, $[\fZ\widetilde{\fp}\,]$ is no square because 
$(-q'/2p) = -1$, $[\fq\widetilde{\fp}\,]$ is no square because 
$(-q'/pq) = -1$, and
$[\fZ\fq\widetilde{\fp}\,]$ is no square because $(-q'/2qp) = -1$.
This implies that $[\,\widetilde{\fp}\,]$ has order $2^n$.
\end{proof}

\begin{lem}\label{pqq}
The real quadratic number field $F = \Q(\sqrt{m}\,)$
has $2$-class number $2$; the prime ideal $\fp$ above $p$
is principal, and the fundamental unit $\eps_m$ of $\OO_F$ becomes
a square in $F(\sqrt{p}\,)$.
\end{lem}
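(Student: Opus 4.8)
The plan is to read all three assertions off the genus theory of $F=\Q(\sqrt{m}\,)$, $m=pqq'$. As $m\equiv1\bmod 8$ we have $\disc F=m$, which is the product $p\cdot(-q)\cdot(-q')$ of exactly three prime discriminants, so the narrow $2$-class group $\Cl_2^+(F)$ has $2$-rank $2$. I would first check that $d_F=m$ admits no $C_4$-factorization: the three factorizations of $m$ into coprime discriminants, namely $m=p\cdot qq'=(-q)\cdot(-pq')=(-q')\cdot(-pq)$, all violate the $C_4$-condition, because $(p/q)=(q/p)=-1$, $(-q/p)=(q/p)=-1$ and $(-q'/p)=(q'/p)=-1$ respectively (here $(-1/p)=+1$ since $p\equiv1\bmod4$). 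By the result of \cite{Lem} recalled in the introduction, $F$ then has no $C_4$-extension unramified outside $\infty$; since the nontrivial automorphism of $F/\Q$ acts by inversion on $\Cl^+(F)$, every subfield of the narrow Hilbert class field is normal over $\Q$, so this forces $\Cl_2^+(F)$ to have exponent $2$, i.e.\ $\Cl_2^+(F)\simeq(2,2)$, and the narrow Hilbert $2$-class field of $F$ is the narrow genus field $F_\gen=F(\sqrt{p}\,,\sqrt{-q}\,)$.

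Since $q\equiv3\bmod4$ ramifies in $F$, $-1$ is not a norm from $F$ --- it is not even a local norm at $q$ --- so $N\eps_m=+1$ and hence $[\Cl^+(F):\Cl(F)]=2$; therefore $|\Cl_2(F)|=|\Cl_2^+(F)|/2=2$, which proves $h_2(F)=2$ and $\Cl_2(F)\simeq\Z/2\Z$. Moreover $F^1$ is the unique totally real quadratic subextension of $F_\gen/F$: of the three quadratic subfields $F(\sqrt{p}\,)=F(\sqrt{qq'}\,)$, $F(\sqrt{-q}\,)$ and $F(\sqrt{-q'}\,)$, only the first is totally real, so $F^1=F(\sqrt{p}\,)=F(\sqrt{qq'}\,)$.

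Because $F^1/F$ is unramified, $[\fp]$ is trivial in $\Cl_2(F)$ exactly when $\fp$ splits in $F^1/F=F(\sqrt{qq'}\,)/F$, i.e.\ when $qq'$ is a square modulo $\fp$; as $\OO_F/\fp\cong\F_p$ this means $(qq'/p)=(q/p)(q'/p)=(-1)(-1)=+1$, which holds; and since $[\fp]$ has order dividing $2$ it already lies in $\Cl_2(F)$, so $[\fp]=1$. Writing $\fp=(\gamma)$, we have $(\gamma^2/p)=(1)$, so $\gamma^2/p$ is a unit, and it is totally positive since $\gamma^2>0$ and $p>0$ in both real embeddings. As $N\eps_m=+1$ makes $\eps_m$ itself totally positive, the group of totally positive units is $\langle\eps_m\rangle$, so $\gamma^2/p=\eps_m^k$ for some $k\in\Z$. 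If $k$ were even, $p=(\gamma\eps_m^{-k/2})^2$ would be a square in $F$, forcing $\Q(\sqrt{p}\,)=F$ and $qq'=m/p$ to be a rational square, which is false; so $k$ is odd, whence $p\eps_m=(\gamma\eps_m^{-(k-1)/2})^2\in(F^\times)^2$, and therefore $\eps_m$ is a square in $F(\sqrt{p}\,)$.

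The only real work is the genus-theoretic bookkeeping of the first two paragraphs --- the $2$-rank, the absence of a $C_4$-factorization of $m$, the equality $N\eps_m=+1$, and the identification of $F^1$ as the totally real quadratic subfield $F(\sqrt{p}\,)=F(\sqrt{qq'}\,)$ of the narrow genus field; after that the assertions about $\fp$ and about $\eps_m$ are short and purely formal.
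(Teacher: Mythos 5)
Your proof is correct, but it runs along a genuinely different track from the paper's. The paper stays entirely in the wide class group: it quotes $h_2(m)=2$ from genus theory, shows $\fq$ and $\fq'$ are \emph{not} principal by reducing the putative equations $X^2-my^2=\pm 4q$ (resp.\ $\pm 4q'$) modulo $p$ against $(q/p)=(q'/p)=-1$, deduces $\fp\sim\fq\fq'\sim 1$ by counting relations, and then actually solves $px^2-qq'y^2=-4$ to exhibit the explicit unit $\eta=\frac12(x\sqrt{p}+y\sqrt{qq'}\,)$ with $\eta^2=\eps_m^u$, $u$ odd. You instead compute the narrow $2$-class group via the R\'edei-type $C_4$-factorization criterion (correctly checking that all three splittings of $m=p\cdot(-q)\cdot(-q')$ fail), descend to the wide group using $N\eps_m=+1$, identify $F^1=F(\sqrt{qq'}\,)$ as the totally real quadratic subfield of the narrow genus field, read off the principality of $\fp$ from $(qq'/p)=+1$ and the decomposition law, and derive the unit statement abstractly from a generator $\gamma$ of $\fp$ and the fact that the totally positive units are exactly $\la\eps_m\ra$. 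Your route is cleaner class field theory and delivers as a byproduct the identification $F^1=\Q(\sqrt{p},\sqrt{qq'}\,)$, which the paper needs later anyway ($M^+$ is asserted to be the Hilbert $2$-class field of $F$); the paper's route is more elementary and produces the explicit relation $px^2-qq'y^2=-4$ and the unit $\eta$, which are reused in the determination of $E_{M^+}$. Two small points to make airtight: the inference ``$N\eps_m=+1$ implies $\eps_m$ totally positive'' uses the standard normalization $\eps_m>1$, and the passage from ``no unramified-outside-$\infty$ $C_4$-extension normal over $\Q$'' to ``$\Cl_2^+(F)$ has exponent $2$'' needs exactly the inversion-action remark you supply, so both are fine as written.
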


\begin{proof}
Consider the prime ideals $\fp$,
$\fq$ and $\fq'$ in $\OO_F$ above $p$, $q$ and $q'$, respectively; 
since $h_2(m) = 2$ (by genus theory) and $N\eps_m = +1$, there must 
be a relation between their ideal classes besides $\fp\fq\fq' \sim 1$.
Now clearly $\fq$ cannot be principal, since this would imply
$X^2  - m y^2 = \pm 4q$; writing $X = qx$ and dividing by
$q$ gives $qx^2 - pq'y^2 = \pm 4$. But this contradicts
our assumption that $(q/p) = -1$. By symmetry, $\fq'$ cannot be 
principal; since $h_2(F) = 2$, their
product $\fq\fq'$ is, hence we have $\fp \sim \fq\fq' \sim 1$.
The equation $X^2 - my^2 = \pm 4p$ then leads as above to
$px^2 - qq' y^2 = - 4$ (the plus sign cannot hold since
it would imply that $(p/q) = 1$), and it is easy to see
that the unit $\eta = \frac12(x\sqrt{p} + y \sqrt{qq'}\,)$ 
satisfies $\eta^2 = \eps_m^u$ for some odd integer $u$. 
\end{proof}

\section{The $2$-class field tower of $k$}

In this section we prove that the $2$-class field tower
of $k$ stops at $k^2$, and that $\Gal(k^2/k) \simeq \Gamma_n$. 

\subsection{Class Numbers of Quadratic Extensions}

Let $K/k$ be a quadratic unramified extension. Then
$q(K) = 1$; we define
\begin{itemize}
\item $\zeta = \zeta_6$ if $q = 3$ and $\zeta = -1$ otherwise; 
\item $e_j$ is the unit group of the maximal order $\OO_j$ of $k_j$;
\item $N_j$ is the relative norm of $k_j/k$;
\item $\kappa_j$ is the subgroup of ideal classes in $\Cl(k)$
      which capitulate in $k_j$;
\item $h_j$ denotes the $2$-class number of $k_j$.
\end{itemize}

Then we claim that Table \ref{T1} gives the unit group, the
relative norm of the unit group, the capitulation order, the $2$-class
number and the relative norm of the $2$-class group for the
quadratic unramified extensions $k_j/k$. 

\begin{table}[h]
\caption{}\label{T1}
\begin{center}
\begin{tabular}{|c||c|c|c|c|c|cc|} \hline
\rsp $j$ & $k_j$ & $e_j$ & $N_j e_j$ & $\# \kappa_j$ & $h_j$ &
	\multicolumn{2}{c|}{$N_j \Cl_2(k_j)$} \\ \hline \hline
\rsp $1$ & $\Q(i,\sqrt{m}\,)$ & $\la i, \eps_{m} \ra$ & $\la 1 \ra$ &
	$4$  &  $8$ & 
		\multicolumn{2}{c|}{$\la [\fZ], [\fp] \ra $} \\ \hline
\rsp $2$ & $\Q(\sqrt{-q},\sqrt{pq'}\,)$ & $\la \zeta, \eps_{pq'} \ra$ 
	& $\la 1 \ra$ & $4$  &  $8$ & 
	$\la [\fZ \fp], [\fZ \fq] \ra $ &
		$ \la [\fZ \fp], [\fq] \ra $ \\ \hline
\rsp $3$ & $\Q(\sqrt{-p},\sqrt{qq'}\,)$ & 
	$\la -1, \eps_{qq'} \ra$ & $\la 1 \ra$ &
	$4$  &  $8$ & \multicolumn{2}{c|}{$\la [\fp], [\fq] \ra $} \\ \hline
\rsp $4$ & $\Q(\sqrt{p},\sqrt{-qq'}\,)$ & 
	$\la -1, \eps_{p} \ra$ & $\la -1 \ra$ & $2$  &  $2^{n+3}$ & 
 	\multicolumn{2}{c|}{$ \la [\fp], [\fZ\fq] \ra $} \\ \hline
\rsp $5$ & $\Q(\sqrt{-q'},\sqrt{pq}\,)$ & 
	$\la -1, \eps_{pq} \ra$ & $\la 1 \ra$ &
	$4$  &  $8$ & $ \la [\fZ],[\fp\fq] \ra $ &
			$ \la [\fZ],[\fq] \ra $ \\ \hline
\rsp $6$ & $\Q(\sqrt{-pq},\sqrt{q'}\,)$ & 
	$\la -1, \eps_{q'} \ra$ & $\la 1 \ra$ &
	$4$  &  $8$ & $ \la [\fZ],[\fq] \ra $ &
			$ \la [\fZ],[\fp\fq] \ra $\\ \hline
\rsp $7$ & $\Q(\sqrt{-pq'},\sqrt{q}\,)$ & 
	$\la -1, \eps_{q} \ra$ & $\la 1 \ra$ &
	$4$  &  $8$ & $ \la [\fZ \fp], [\fq]  \ra $ &
	      $\la [\fZ \fp], [\fZ \fq]  \ra$ \\ \hline
\end{tabular} \end{center}
\end{table}
\medskip

The left hand side of the column $N_j \Cl_2(k_j)$ refers to 
case $A$ (i.e. $(q/q') = -1$), the right hand side to case $B$.
We will now verify the table. The unit groups are easy to
determine - we simply use the following proposition from
\cite{BSS}:

\begin{prop}
Let $k$ be an imaginary quadratic number field, and assume
that $K/k$ is a quadratic unramified extension. Then $K$
is a $V_4$-extension, and $q(K) = 1$.
\end{prop}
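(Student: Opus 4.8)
The plan is to prove the two assertions of the proposition — that $K/\Q$ is a $V_4$-extension, and that the unit index $q(K)$ equals $1$ — by separate arguments: a ramification argument for the first, and an analysis of the units of the biquadratic field $K$ for the second. For the first, recall that since $k$ is imaginary quadratic, complex conjugation sends each ideal class $\mathfrak a$ to $\overline{\mathfrak a}$ and $\mathfrak a\overline{\mathfrak a}=(N\mathfrak a)$ is principal, so conjugation acts on $\Cl(k)$ as inversion, hence trivially on $\Cl(k)/\Cl(k)^2$. Therefore the index-$2$ subgroup of $\Cl(k)$ corresponding to $K$ is stable under $\Gal(\overline{\Q}/\Q)$, so $K/\Q$ is Galois; as $[K:\Q]=4$, its group is either cyclic of order $4$ or $\simeq V_4$. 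It is not cyclic: at least one rational prime ramifies in $k/\Q$, and in a $\Z/4$-extension its inertia group, being forced to surject onto $\Gal(k/\Q)$, would be all of $\Z/4$, so the prime would be totally ramified in $K$ and hence ramified in $K/k$ — contrary to hypothesis. Thus $\Gal(K/\Q)\simeq V_4$, and $K$ is abelian over $\Q$; by the theory of genus fields, $K=\Q(\sqrt{d_+},\sqrt{d_-})$ with $\disc k=d_+d_-$ a product of two coprime fundamental discriminants.

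Since $\disc k<0$, exactly one of $d_+,d_-$ is positive; say $d_+>0>d_-$, so $k_+:=\Q(\sqrt{d_+})$ is the unique real quadratic subfield of $K$, while $K$ itself is totally complex. By definition $q(K)=[E_K:W_KE_1E_2E_3]$, where $E_1,E_2,E_3$ are the unit groups of the three quadratic subfields and $W_K$ is the group of roots of unity of $K$. As $K$ is totally complex, $E_K$ has rank $1$, and among the three subfields only $k_+$ contains a unit of infinite order; hence $W_KE_1E_2E_3=W_K\langle\eps_{k_+}\rangle$, and writing $\eps_{k_+}\equiv\eps_K^m\pmod{W_K}$ gives $q(K)=m$. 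Since $N_{K/k_+}(\eps_K)$ lies in $E_{k_+}$, comparing it with a power of $\eps_{k_+}$ forces $m\mid 2$ (the case in which the generator $\sigma$ of $\Gal(K/k_+)$ inverts $\eps_K$ modulo $W_K$ is excluded, as it would make $\eps_{k_+}^2$ a root of unity). So $q(K)\in\{1,2\}$, the classical bound for such fields.

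It remains to exclude $q(K)=2$, and this is where the hypothesis that $K/k$ is unramified is used in an essential way. If $q(K)=2$, then $\sqrt{\zeta\eps_{k_+}}\in K$ for some $\zeta\in W_K$; since $\zeta\eps_{k_+}\notin(k_+^\times)^2$ (were it so, $\eps_{k_+}$ would be a root of unity), this forces $K=k_+(\sqrt{\zeta\eps_{k_+}})$, an extension of $k_+$ whose radicand is a unit, hence unramified at every odd prime of $k_+$. On the other hand $K=k_+(\sqrt{d_-})$, and the conductor-discriminant formula gives $N_{k_+/\Q}(\mathfrak d_{K/k_+})=d_-^2$, so $K/k_+$ ramifies at every odd prime dividing $d_-$. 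Hence $d_-$ has no odd prime divisor, i.e. $d_-\in\{-4,-8\}$, and a short direct computation — using that $2$ is unramified in $k_+$ — rules out both: $d_-=-4$ would make $-\zeta\eps_{k_+}$ a square in $k_+$, impossible as above, and $d_-=-8$ would produce a principal ideal of $k_+$ of norm $2$ whose square is $(2)$, impossible since $2$ is unramified in $k_+$. Therefore $q(K)=1$.

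The genuinely delicate point is this last step: identifying the subfields and establishing $q(K)\le 2$ are routine, but the sharper equality $q(K)=1$ really exploits that $K$ lies in the Hilbert class field of $k$, via the observation that a unit radicand cannot account for the odd ramification that $K/k_+$ must exhibit.
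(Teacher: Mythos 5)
First, note that the paper does not prove this proposition at all: it is quoted verbatim from [BSS] (Benjamin--Sanborn--Snyder) and simply applied, so there is no in-paper argument to compare yours against. Judged on its own, your first half is fine: complex conjugation acts as inversion on $\Cl(k)$, so every index-$2$ subgroup is Galois-stable, $K/\Q$ is normal of degree $4$, and the inertia argument correctly rules out the cyclic case. The reduction of the second half to the Hasse-type bound $q(K)\le 2$ (via $K$ being a CM field with maximal real subfield $k_+$) is also sound.

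The gap is in the exclusion of $q(K)=2$. Your argument silently treats $\zeta$ as if it lay in $k_+$, i.e.\ as if $\zeta=\pm1$. But $\zeta$ ranges over $W_K$, and $W_K\ne\{\pm1\}$ exactly when $d_-\in\{-4,-3\}$ (so that $\Q(i)$ or $\Q(\sqrt{-3}\,)$ is a subfield of $K$). In that situation $\zeta\eps_{k_+}\notin k_+$, and two steps break. (a) The claim ``$K=k_+(\sqrt{\zeta\eps_{k_+}}\,)$ has unit radicand, hence is unramified at odd primes of $k_+$'' is Kummer theory for radicands \emph{in} $k_+$; it fails for $\zeta$ of order $3$ or $6$, and indeed $K=k_+(\sqrt{-3}\,)$ \emph{is} ramified at $3$ over $k_+$, so your conclusion ``$d_-$ has no odd prime divisor'' does not eliminate $d_-=-3$ at all. (b) For $d_-=-4$ with $\zeta=\pm i$, the deduction ``$-\zeta\eps_{k_+}$ is a square in $k_+$'' likewise uses Kummer theory with a radicand not in $k_+$ and does not follow; one has to argue instead that $\sqrt{\pm i\,\eps_{k_+}}\in k_+(i)$ forces $\pm 2\eps_{k_+}\in (k_+^\times)^2$ (factor $x^4+\eps_{k_+}^2$ over $k_+$), which is impossible because $(2)$ is not the square of an ideal of $k_+$ when $2\nmid d_+$. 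A parallel computation for $\zeta$ of order $3$ or $6$ forces $\pm 3\eps_{k_+}\in(k_+^\times)^2$, impossible since $3\nmid d_+$. With these two missing cases supplied the proof closes; as written, it does not cover them.
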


Applying this to the extension $K = k_j$ we find that
the unit group $e_j$ is generated
by the roots of unity in $k_j$ and the fundamental unit
of the real quadratic subfield. Since $\# \kappa_j =
2(E_k:N_{K/k}E_K)$ for unramified quadratic extensions $K/k$,
(see \cite{BSS}), the order of the capitulation kernel is easily
computed from the unit groups. Similarly, the $2$-class 
number of $h(k_j)$ is given by the class number formula
(see \cite{LemK}), and since the unit index equals $1$ in
all cases, its computation is trivial. 

Let us compute $N_j \Cl_2(k_j)$ in a few cases. Take e.g. 
$k_j = \Q(\sqrt{p},\sqrt{-qq'}\,)$: here the
prime ideal $\fp$ is a norm because $(-qq'/p) = + 1$,
whereas $\fZ$ and $\fq$ are inert, since
$p \equiv 5 \bmod 8$ and $(p/q) = -1$. This implies that
the ideal class $[\fZ\fq]$ must be a norm, since
$(\Cl_2(k):N_j \Cl_2(k_j)) = 2$ by class field theory.

As another example, take $k_j = \Q(\sqrt{-q}, \sqrt{pq'}\,)$
and assume that we are in case B), i.e. that $(q/q') = 1$.
Then $\fq$ is a norm since $(pq'/q) = + 1$, and the ideals
$\fZ$ and $\fp$ are inert; this yields
$N_j \Cl_2(k_j) = \la [\fq], [\fZ\fp]\ra$.

\subsection{The $2$-class group of $k_4$ is $(4,2^{n+1})$}

We know $N_4 \Cl_2(k_4) = \la [\fp], [\fZ\fq] \ra$
and $\# \kappa_4 = 2$. Since $\fp = (p,\sqrt{d}\,)$ becomes
principal, we have $\kappa_4 = \la [\fp] \ra$.

Let $\widetilde{\fp}$ denote a prime ideal above $p$ 
in $\Q(\sqrt{-qq'}\,)$; we have shown in Lemma \ref{L3} that 
$\widetilde{\fp}^{2^{n-1}} \sim \fZ\fq$. Since
this ideal class does not capitulate in $k_4$, we
see that $\widetilde{\fp}$ generates a cyclic subgroup
of order $2^n$ in $\Cl_2(k_4)$. 

Let $\OO$ denote the maximal order of $k_4$; then
$p \OO = \fP^2 {\fP'}^2$. We claim that
$\fP^2 \sim \widetilde{\fp}$. To this end, let $s$ and $t$ be the
elements of order $2$ in $\Gal(k_4/\Q)$ which fix
$F$ and $k$, respectively. Using the identity
$2 + (1 + s + t + st) = (1+s) + (1+t) + (1+st)$ of the
group ring $\Z[\Gal(k_4/\Q)]$ and observing that
$\Q$ and the fixed field of $st$ have odd class numbers
we find
$$\fP^2 \sim \fP^{1+s} \fP^{1+t} \fP^{1+st}
	\sim \widetilde{\fp} \fp \sim \widetilde{\fp},$$
where the last relation (in $\Cl_2(k_4)$) comes from
the fact that $\fp \in \kappa_4$. 

Thus $\la [\fZ], [\fP] \ra$ is a subgroup
of type $(2,2^{n+1})$ (hence of index $2$) in  $\Cl_2(k_4)$.
Taking the norm to $F$ we find 
$N_F \la [\fZ], [\fP] \ra = \la [\,\widetilde{\fp}\,] \ra$;
therefore there exists an ideal $\fA$ in $\OO$
such that $N_F \fA \sim \fZ$ (equivalence in $\Cl(F)$). 
We conclude that $\Cl_2(k_4) = \la [\fZ], 
[\fA], [\fP] \ra$. 
Similarly, letting $N_4$ denote the norm in $k_4/k$
we get $N_4\la [\fZ], [\fP] \ra = \la [\,\widetilde{\fp}\,] \ra$;
this shows that $N_4 \fA \sim \fZ\fq$ or
$N_4 \fA \sim \fZ\fq\fp$ (equivalence in $\Cl(k)$; which of the
two possibilities occurs will be determined below).
But since $\fp \sim 1 $ in $\Cl_2(k_4)$ we can conclude that
$\fA^{1+t} \sim \fZ\fq$. This gives
$\fA^2 \sim \fZ \cdot \fZ\fq
 \sim \fq$, and in particular $[\fA]$ has order $4$
in $\Cl_2(k_4)$. 

Finally we claim that $\Cl_2(k_4) = \la [\fA], [\fP] \ra$,
i.e. that $[\fZ] \in \la [\fA], [\fP] \ra$.
This is easy: from $\fP^2 \sim \widetilde{\fp}$ we deduce that
$\fP^{2^n} \sim \fZ\fq$, hence we get
$\fA^2\fP^{2^n} \sim \fZ$. 

\subsection{Proof that $k^2 = k^3$}

Since $\Cl_2(k_4) \simeq (4,2^{n+1})$, we can apply
Prop. 4 of \cite{BLS}, which says that $k_4$ has abelian
$2$-class field tower if and only if it has a quadratic
unramified extension $K/k_4$ such that $h_2(K) = \frac12
h_2(k_4)$. 

Put $M = \Q(i,\sqrt{p},\sqrt{qq'}\,)$, and consider
its subfield $M^+ = \Q(\sqrt{p},\sqrt{qq'}\,)$.
$M^+$ is the Hilbert $2$-class field of $F = \Q(\sqrt{m}\,)$,
hence $h_2(M^+) = 1$, and the class number formula gives
$q(M^+) = 2$. In fact it follows from Lemma \ref{pqq} that
$E_{M^+} = \la -1, \eps_p, \eps_{qq'}, \sqrt{\eps_m}\,\ra$.

According to Thm. 1.ii).1. of \cite{LemAA}, Hasse's unit
index $Q(M)$ equals $1$ if $w_M \equiv 4 \bmod 8$
($w_M$ denotes the number of roots of unity in $M$;
in our case $w_M = 4$ or $w_M = 12$) and if the ideal 
generated by $2$ is not a square in $M^+$ (this is the 
case here since $2 \nmid \disc M^+$). Thus $Q(M) = 1$ and 
$E_M = \la \xi, \eps_p, \eps_{qq'}, \sqrt{\eps_m}\,\ra$,
where $\xi$ is a primitive $4^{th}$ ($q \ne 3$) or 
$12^{th}$ ($q = 3$) root of unity. This shows that the 
unit index of the extension $M/\Q(i)$ equals $2$, and 
the class number formula finally gives 
$$ \ts h_2(M) = \frac14 \cdot 2 \cdot 1 \cdot 2^n \cdot 8
	= 2^{n+2} = \frac12 h_2(k_4).$$

\subsection{Computation of $\Gal(k^2/k)$}

Put $L = k^2$ and $K = k_4$; clearly 
$\sigma = \big(\frac{L/K}{\fP}\big)$ and 
$\tau = \big(\frac{L/K}{\fA}\big)$ generate
the abelian subgroup $\Gal(L/K) \simeq (2^{n+1},4)$ of
$\Gamma_n = \Gal(L/k)$. If we put
$\rho = \big(\frac{L/k}{\fZ}\big)$ then $\rho$
restricts to the nontrivial automorphism of $K/k$
(since $[\fZ]$ is not a norm from $k_4/k$), which shows 
that $\Gamma_n = \la \rho, \sigma, \tau \ra$.
The relations are easily computed:
$\rho^2 = \big(\frac{L/K}{\fZ}\big) = \sigma^{2^n}\tau^2$,
since $\fZ \sim \fA^2\fP^{2^n}$, 
$\rho^{-1}\sigma\rho =  \big(\frac{L/K}{\fP^\rho}\big) 
  = \sigma^{-1}$, since $\fP \fP^\rho = \fp \sim 1$, and
$\tau\rho^{-1}\tau\rho =  \big(\frac{L/K}{\fA^{1+\rho}}\big) 
  = \big(\frac{L/K}{\fZ\fq}\big) = \sigma^{2^n}$. Thus
$$ \begin{array}{rl}
	\Gamma_n = \la \rho, \sigma, \tau: & 
	   \rho^4 = \sigma^{2^{n+1}} = \tau^4 = 1,
	   \rho^2 = \sigma^{2^n}\tau^2,  \\
	     & [\sigma,\tau] = 1, [\rho,\sigma] = \sigma^2,
		[\tau,\rho] = \sigma^{2^{n-1}} \tau^2 \ra
	\end{array} $$
as claimed. We refer the reader to Hasse's report \cite{Has}
for the used properties of Artin symbols.

\subsection{Additional Information on Units and Class Groups}

Using the presentation of $\Gamma_n$ it is easy to
compute the abelianization of its subgroups of index $2$;
this shows at once that $\Cl_2(k_j) \simeq (2,4)$
for all $j \ne 4$ in case A and in for all $j \ne 5, 7$ in case B. 
More results are contained in Table \ref{T2}.

\begin{table}
\caption{}\label{T2}
\begin{center}
\begin{tabular}{|c||c|cc|c|c|} \hline
\rsp $j$ & $k_j$ & \multicolumn{2}{c|}{$\Cl_2(k_j)$} & $\kappa_j$ & 
	$\Gal(k^2/k_j)$ \\ \hline \hline
\rsp $1$ & $\Q(i,\sqrt{m}\,)$ & \multicolumn{2}{c|}{$(2,4)$} & 
	$\la [\fZ], [\fp] \ra $ & 
	$\la \rho, \sigma, \tau^2 \ra$ \\ \hline
\rsp $2$ & $\Q(\sqrt{-q},\sqrt{pq'}\,)$ & \multicolumn{2}{c|}{$(2,4)$} & 
	$\la [\fq], [\fZ\fp] \ra $ &
	   $\la \rho\sigma, \rho\tau, \sigma^2, \tau^2 \ra$  \\ \hline
\rsp $3$ & $\Q(\sqrt{-p},\sqrt{qq'}\,)$ & \multicolumn{2}{c|}{$(2,4)$} & 
	$\la [\fp], [\fq] \ra $ & 
    $\la \rho\tau, \sigma, \tau^2 \ra$ \\ \hline
\rsp $4$ & $\Q(\sqrt{p},\sqrt{-qq'}\,)$ &
	 \multicolumn{2}{c|}{$(4,2^{n+1})$} &	$\la [\fp] \ra $ &
    $\la \sigma, \tau \ra$ \\ \hline
\rsp $5$ & $\Q(\sqrt{-q'},\sqrt{pq}\,)$ & $(2,4)$ & $(2,2,2)$ & 
	$\la [\fp\fq], [\fZ\fp] \ra $ &
    $\la \rho, \sigma^2, \tau \ra$  \\ \hline
\rsp $6$ & $\Q(\sqrt{-pq},\sqrt{q'}\,)$ & \multicolumn{2}{c|}{$(2,4)$} & 
	$\la [\fp\fq], [\fZ] \ra $ &
    $\la \rho, \sigma\tau, \sigma^2 \ra$  \\ \hline
\rsp $7$ & $\Q(\sqrt{-pq'},\sqrt{q}\,)$ &  $(2,4)$ & $(2,2,2)$ &
	$\la [\fZ], [\fq] \ra $ & 
	$\la \rho\sigma, \tau, \sigma^2 \ra$ \\ \hline
\end{tabular}
\end{center}
\end{table}

The computation of the capitulation kernels $\kappa_j$
is no problem at all: take $k_1$, for example. We have seen
that $\fp$ is principal in $\Q(\sqrt{m}\,)$, hence
it is principal in $k_1$. Moreover, $\fZ = (1+i)$ is
clearly principal, and since we know from Table \ref{T1}
that $\# \kappa_1 = 4$ we conclude that 
$\kappa_1 = \la [\fp], [\fZ] \ra$.

Before we determine the Galois groups corresponding to 
the extensions $k_j/k$ we examine whether 
$N_4 \fA \sim \fZ\fq$ or $N_4 \fA \sim \fZ\fp\fq$ (equivalence
in $\Cl_2(k)$). We do this as follows: first we choose
a prime ideal $\fR$ in $\OO$ such that $[\fA] = [\fR]$
(this is always possible by Chebotarev's theorem). Then
its norms $\frb$ in $\OO_F$ and $\fr$ in $\OO_k$ are
prime ideals, and we have $\fZ\frb \sim 1$ in $\Cl_2(F)$.
This implies that $2r = x^2+2qq'y^2$, from which
we deduce that $(2r/q) = +1$. If we had
$\fZ\fq\fr \sim 1$ in $\Cl_2(k)$, this would imply
$2qr = U^2 + mv^2$; writing $U = qu$ this gives
$2r = qu^2 + pq' v^2$. This in turn shows $(2r/q) = (pq'/q)
 = (-q'/q)$, which is a contradiction if $(q/q') = -1$. 
Thus $N_4 \fA \sim \fZ\fp\fq$ in case A).
Similarly one shows that $N_4 \fA \sim \fZ\fq$ in case B). 

Now consider the automorphism $\tau = \big(\frac{L/K}{\fA}\big)$;
we have just shown that $\tau = \big(\frac{L/K}{\fR}\big)$.
Let $M$ be a quadratic extension of $K$ in $L$; then
the restriction of $\tau \in \Gal(L/K)$ to $M$ is
$\tau|_M = \big(\frac{M/K}{\fR}\big)$, and this is the
identity if and only if the prime ideals in the ideal class 
$[\fR]$ split in $M/K$. But from $2r = x^2+2qq'y^2$ we get
$r \equiv 3 \bmod 4$, $(2r/q) = +1$, hence $(q/r) = 1$ since
$q \equiv 3 \bmod 8$. This shows that $\fR$ splits
in $K_5 = K(\sqrt{q}\,) = \Q(\sqrt{p}, \sqrt{q}, \sqrt{-q'}\,)$,
thus $\tau$ fixes $K_5$, and we find 
$\Gal(L/K_5) = \la \tau, \sigma^2\ra$.

Next $\fP$ splits in $K_1 = K(\sqrt{-1}\,)$, hence $\sigma$ 
fixes $K_1$, and we have $\Gal(L/K_1) = \la \sigma, \tau^2 \ra$.
Finally, $\rho$ fixes those extensions $k_j/k$ in which
$[\fZ]$ splits, i.e. $k_1$, $k_5$ and $k_6$. In particular,
$\rho$ fixes their compositum $K_3$, and we find
$\Gal(L/K_3) = \la \rho, \sigma^2, \tau^2 \ra$.

Using elementary properties of Galois theory we can 
fill in the entries in the last column of Tables \ref{T2}
and \ref{T3}. 

The $2$-class group structure for
the subfields $K_j$ of $k^1/k$ of relative degree $4$,
the relative norms of their $2$-class groups and their
Galois groups are given in Table \ref{T3}. The structure
of $\Cl_2(K_j)$ is easily determined from
$\Cl_2(K_j) \simeq \Gal(k^2/K_j)^{\ab}$. Note
that in every extension $K_j/k$ the whole $2$-class group
of $k$ is capitulating. 

\begin{table}
\caption{}\label{T3}
\begin{center}
\begin{tabular}{|c||c|c|cc|c|} \hline
\rsp $j$ & $K_j$ & $ \Cl_2(K_j) $ 
	& \multicolumn{2}{c|}{$N_j \Cl_2(K_j)$} & 
	$\Gal(k^2/K_j)$ \\ \hline \hline
\rsp $1$ & $\Q(\sqrt{-1}, \sqrt{p}, \sqrt{qq'}\,)$ &
	$(2,2^{n+1})$ & \multicolumn{2}{c|}{$\la [\fp] \ra$} & 
	$\la \sigma, \tau^2 \ra$ \\ \hline
\rsp $2$ & $\Q(\sqrt{-1}, \sqrt{-q}, \sqrt{-pq'}\,)$ &
	$(2,4)$ & \multicolumn{2}{c|}{$\la [\fZ\fp] \ra$} & 
	$\la \rho\sigma, \sigma^2 \ra$ \\ \hline
\rsp $3$ & $\Q(\sqrt{-1}, \sqrt{-q'}, \sqrt{-pq}\,)$ &
	$(2,4)$ & \multicolumn{2}{c|}{$\la [\fZ] \ra$} & 
	$\la \rho, \sigma^2 \ra$ \\ \hline
\rsp $4$ & $\Q(\sqrt{p}, \sqrt{-q}, \sqrt{q'}\,)$ &
	$(2,2^{n+1})$ & $\la [\fZ\fq] \ra $ & $\la [\fZ\fp\fq] \ra $ &
	$ \la \sigma\tau, \sigma^2 \ra $  \\ \hline
\rsp $5$ & $\Q(\sqrt{p}, \sqrt{-q'}, \sqrt{q}\,)$ &
	$(4,2^n)$ & $ \la [\fZ\fp\fq] \ra $ & $ \la [\fZ\fq] \ra $ &
	$ \la \tau, \sigma^2 \ra $  \\ \hline
\rsp $6$ & $\Q(\sqrt{-q}, \sqrt{-p}, \sqrt{-q'}\,)$ &
	$(2,4)$ &  $ \la [\fp\fq] \ra $ & $ \la [\fq] \ra $ &
	$ \la \rho\tau, \sigma^2 \ra $ \\ \hline
\rsp $7$ & $\Q(\sqrt{q'}, \sqrt{-p}, \sqrt{q}\,)$ &
	$(2,4)$ &  $ \la [\fq] \ra $ & $ \la [\fp\fq] \ra $ &
	$ \la \rho\sigma\tau, \sigma^2 \ra $ \\ \hline
\end{tabular}
\end{center}
\end{table}

It is also possible to determine the unit groups $E_j$ 
of the fields $K_j$ (see Table \ref{T4}). In fact, applying
the class number formula to the $V_4$-extensions $K_j/k$
we can determine the unit index $q(K_j/k)$ since we already
know the class numbers. Thus all we have to do is check
that the square roots of certain units lie in the
field $K_j$. This is done as follows: consider the quadratic
number field $M = \Q(\sqrt{q}\,)$. Since it has odd class number
and since $2$ is ramified in $M$ we conclude that there exist
integers $x, y \in \Z$ such that $x^2 - qy^2 = \pm 2$; from
$q \equiv 3 \bmod 8$ we deduce that, in fact, $x^2 - qy^2 = -2$.
Put $\eta_q = (x+y\sqrt{q}\,)/(1+i)$; then $\eta_q^2 = -\eps_q^u$ for
some odd integer $u$ shows that $i\eps_q$ becomes a square
in $M(i)$. Doing the same for the prime $q'$ we also see
that $\eta_q \eta^{-1}_{q'} \in \Q(\sqrt{q}, \sqrt{q'}\,)$,
and this implies that 
$\sqrt{\eps_q\eps_{q'}} \in \Q(\sqrt{q}, \sqrt{q'}\,)$. The other
entries in Table \ref{T4} are proved similarly.

Using the same methods we can actually show that
$$ E = \la \zeta, \eps_p, \sqrt{i\eps_q}, \sqrt{i\eps_{q'}}, 
   \sqrt{\eps_{qq'}}, \sqrt{i\eps_{pq}}, \sqrt{i\eps_{pq'}}, 
   \sqrt{\eps_m}\, \ra $$
has index $2$ in the full unit group of $k_\gen$. 

\begin{table}
\caption{}\label{T4}
\begin{center}
\begin{tabular}{|c||c|c|} \hline
\rsp $j$ & $E_j$ & $q(K_j/k)$ \\ \hline \hline
\rsp $1$ & $\la i, \eps_p, \eps_{qq'}, 
		\sqrt{\eps_m\eps_{pq'}}\, \ra $ & $2$ \\ \hline
\rsp $2$ & $\la \zeta, \sqrt{i\eps_q}, \eps_{pq'}, 
			\sqrt{\eps_m\eps_{pq'}}\, \ra $ & $4$ \\ \hline
\rsp $3$ & $\la i, \sqrt{i\eps_{q'}}, \eps_{pq}, 
			\sqrt{\eps_m\eps_{pq}}\, \ra $ & $4$ \\ \hline
\rsp $4$ & $\la -1, \eps_p, \eps_{q'}, 
			\sqrt{\eps_{q'}\eps_{pq'}}\, \ra $ & $2$ \\ \hline
\rsp $5$ & $\la -1, \eps_p, \eps_{q}, 
			\sqrt{\eps_{q}\eps_{pq}}\, \ra $ & $2$ \\ \hline
\rsp $6$ & $\la -1, \eps_{pq}, \sqrt{\eps_{pq}\eps_{pq'}}, 
			\sqrt{\eps_{qq'}} \,\ra $  & $4$ \\ \hline
\rsp $7$ & $\la -1, \eps_q, \sqrt{\eps_q\eps_{q'}}, 
			\sqrt{\eps_{qq'}} \,\ra $  & $4$ \\ \hline
\end{tabular}
\end{center}
\end{table}

\section{The Field with Discriminant $d = -420$}\label{Secl}

The smallest example in our family of quadratic number fields
is given by $d = -420 = -4\cdot 3 \cdot 5 \cdot 7$. Poitou
\cite{Poi} noticed that the class field tower of $k = \Q(\sqrt{d}\,)$
must be finite, and Martinet \cite{Mar} observed (without proof) that 
its class field tower terminates at the second step with $k^2$, and
that $(k^2:k) = 32$. In this section, we will give a complete
proof (this will be used in \cite{Yam}).

The unit group of the genus class field $k_\gen$ is
$$E = \la \zeta_{12}, \eps_5, \sqrt{i\eps_3}, \sqrt{i\eps_7}, 
      \sqrt{i\eps_{15}}, \sqrt{i\eps_{35}}, \sqrt{\eps_{21}}, \eta \ra,$$
where $\eta = \sqrt{\eps_7^{-1} \eps_5\sqrt{\eps_3\eps_7\eps_{15}\eps_{35}
\eps_{105}}\,} = \frac12(1+2\sqrt5 + \sqrt7 + \sqrt{15} + \sqrt{21}\,)$.
Since $k_\gen$ has class number $4$, the second Hilbert class field 
is just its $2$-class field, which can be constructed explicitly: 
$k^2 = k^1(\sqrt{\mu},\sqrt{\nu}\,)$, where
$\mu = (4i-\sqrt{5}\,)(2+\sqrt5\,)$ and 
$\nu = (2\sqrt{-5} + \sqrt7\,)(8+3\sqrt{7}\,)$.

We claim that $k^2$ has class number $1$. Odlyzko's unconditional
bounds show that $h(k^2) \le 10$, and since we know that 
it has odd class number, it suffices to prove that no odd
prime $\le 10$ divides $h(k^2)$. For $p = 5$ and $p = 7$
we can use the following result which goes back to Gr\"un
\cite{Gru34}: 

\begin{prop}
Let $L/k$ be a normal extension of number fields, and let $K$ denote
the maximal subfield of $L$ which is abelian over $k$. 
\begin{enumerate}
\item[i)] If $\Cl(L/K)$ is cyclic, then $h(L/K) \mid (L:K)$;
\item[ii)] If $\Cl(L)$ is cyclic, then $h(L) \mid (L:K)e$, where 
      $e$ denotes the exponent of
      $N_{L/K} \Cl(L)$. Observe that $e \mid h(K)$, and that $e = 1$ 
      if $L$ contains the Hilbert class field $K^1$ of $K$. 
\end{enumerate}
Similar results hold for the $p$-Sylow subgroups.
\end{prop}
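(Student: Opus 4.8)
The plan is to put the $\Gal(L/k)$-module structure of the ideal class group to work, together with the fact that the commutator subgroup acts trivially on any cyclic module. Write $G=\Gal(L/k)$ and $H=[G,G]$, so that $K=L^{H}$ and $\Gal(L/K)=H$; since $H$ is normal in $G$, the subgroup $\Cl(L/K):=\ker(N_{L/K}\colon\Cl(L)\to\Cl(K))$ is a $G$-submodule of $\Cl(L)$. I would begin by recording the identity $j(N_{L/K}(c))=\prod_{\sigma\in H}c^{\sigma}$ for $c\in\Cl(L)$, where $j\colon\Cl(K)\to\Cl(L)$ denotes extension of ideals; it is immediate from $(N_{L/K}\mathfrak a)\OO_L=\prod_{\sigma\in H}\sigma(\mathfrak a)$.

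For i) I would assume $\Cl(L/K)$ cyclic. Then the action map $G\to\Aut(\Cl(L/K))$ has abelian image, so $H=[G,G]$ acts trivially on $\Cl(L/K)$; hence for $c\in\ker N_{L/K}$ one gets $c^{(L:K)}=\prod_{\sigma\in H}c^{\sigma}=j(N_{L/K}(c))=j(1)=1$. Thus the exponent of $\Cl(L/K)$ divides $(L:K)$, and being cyclic this group has order $h(L/K)$ dividing $(L:K)$.

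For ii) I would assume $\Cl(L)$ cyclic. Then $\ker N_{L/K}$ is cyclic, so i) gives $h(L/K)\mid(L:K)$; and $N_{L/K}\Cl(L)$, a quotient of a cyclic group, is cyclic, so its order equals its exponent $e$, whence $h(L)=\#\ker N_{L/K}\cdot\#N_{L/K}\Cl(L)=h(L/K)\cdot e$ divides $(L:K)e$. That $e\mid h(K)$ is clear because $N_{L/K}\Cl(L)\le\Cl(K)$. If $K^{1}\subseteq L$, then $N_{L/K}=N_{K^{1}/K}\circ N_{L/K^{1}}$ gives $N_{L/K}\Cl(L)\subseteq N_{K^{1}/K}\Cl(K^{1})$, and the latter is trivial: a prime $\mathfrak P$ of $K^{1}$ over $\mathfrak p$ of $K$ has residue degree $f$ equal to the order of the Frobenius of $\mathfrak p$, i.e. of $[\mathfrak p]$ under $\Cl(K)\cong\Gal(K^{1}/K)$, so $N_{K^{1}/K}[\mathfrak P]=[\mathfrak p]^{f}=1$; hence $e=1$.

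The $p$-Sylow statements come out of the very same argument applied throughout to the (characteristic, hence $G$-stable) $p$-Sylow subgroups, using that $N_{L/K}$ and $j$ carry $p$-Sylow subgroups to $p$-Sylow subgroups and that $\Cl_p(L/K)$ is the $p$-Sylow subgroup of $\Cl(L/K)$; here $(L:K)$ may even be replaced by its $p$-part, since $c^{(L:K)}=1$ forces $c^{p^{v}}=1$ for $c$ of $p$-power order. I do not foresee a real obstacle: the whole thing is formal, the single nontrivial input being $N_{K^{1}/K}\Cl(K^{1})=1$; the only point worth underlining is that i) needs merely the \emph{relative} class group $\Cl(L/K)$ to be cyclic, not $\Cl(L)$ itself, since the entire computation there lives inside $\ker N_{L/K}$.
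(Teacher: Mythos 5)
Your proof is correct and follows essentially the same route as the paper: the commutator subgroup $\Gal(L/K)$ acts trivially on a cyclic class module, so $j\circ N_{L/K}$ becomes raising to the power $(L:K)$, which annihilates $\ker N_{L/K}$ and, combined with the exponent $e$ of the norm image, annihilates $\Cl(L)$. You are somewhat more careful than the paper (distinguishing $N_{L/K}$ from its composition with the extension map $j$, and actually proving the stated observations $e\mid h(K)$ and $e=1$ when $K^1\subseteq L$), but the underlying argument is identical.
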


\begin{proof}
Let $C$ be a cyclic group of order $h$ on which $\Gamma = \Gal(L/k)$
acts. This is equivalent to the existence of a homomorphism
$\Phi: \Gamma \lra \Aut(C) \simeq \Z/(h-1)\Z$. Since $\im \Phi$ 
is abelian, $\Gamma' \subseteq \ker \Phi$, hence $\Gamma'$ acts 
trivially on $C$. Now $\Gamma'$ corresponds to the field $K$ via 
Galois theory, and we find $ N_{L/K}c = c^{(L:K)}.$
Putting $C = \Cl(L/K)$, we see at once that $(L:K)$ annihilates 
$\Cl(L/K)$; if we denote the exponent of $N_{L/K} \Cl(L)$ by $e$
and put $C = \Cl(L)$, then we find in a similar way that $(L:K)e$ 
annihilates $\Cl(L)$. 
\end{proof}

This shows at once that $h(k^2)$ is not divisible by $5$ or $7$.
For $p = 3$, the proof is more complicated. First we use an
observation due to R. Schoof:

\begin{prop}\label{PS}
Let $L/k$ be a normal extension with Galois group $\Gamma$,
and suppose that $3 \nmid \# \Gamma$. Let $K$ be the maximal
abelian extension of $k$ contained in $L$. If $\Gamma$
does not have a quotient of type $SD_{16}$, 
if $\Cl_3(L) \simeq (3,3)$, and if $3 \nmid h(k)$, then
there exists a subfield $E$ of $L/k$ such that 
$\Cl_3(E) \simeq (3,3)$ and $\Gal(E/k) \simeq D_4$ or $H_8$.
\end{prop}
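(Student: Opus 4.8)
The plan is to reduce the claim to a statement about the group $\Gamma = \Gal(L/k)$ together with its action on $\Cl_3(L)$, and to analyze this purely group-theoretically. Since $3 \nmid \#\Gamma$, the group $\Cl_3(L)$ is a module over $\Z_3[\Gamma]$, and the subgroup $A = \Cl_3(L)$ of type $(3,3)$ is a two-dimensional $\F_3$-vector space on which $\Gamma$ acts, i.e. we get a representation $\Gamma \to \GL_2(\F_3)$. The kernel of this representation corresponds to a subfield $E$ of $L/k$ with $\Gamma(E/k) \hookrightarrow \GL_2(\F_3)$, and $\Cl_3(E)$ contains (the image of) $A$; one checks using $3 \nmid h(k)$ and the fact that $K$ is the maximal abelian subextension that $A$ survives into $\Cl_3(E)$ as a group of type $(3,3)$, so the field $E$ is the object we want provided we can pin down $\Gal(E/k)$.

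The next step is to determine which subgroups of $\GL_2(\F_3)$ can occur. One has $\#\GL_2(\F_3) = 48$, and its $2$-Sylow subgroup is semidihedral of order $16$. Since $3 \nmid \#\Gamma$, the image $G = \im(\Gamma \to \GL_2(\F_3))$ is a $2$-group, hence conjugate to a subgroup of $SD_{16}$. The hypothesis that $\Gamma$ has no quotient of type $SD_{16}$ rules out $G \simeq SD_{16}$ itself. The remaining subgroups of $SD_{16}$ that act irreducibly — which is what we need, since a reducible action would force $A$ to contain a $\Gamma$-stable line, and then (using $3 \nmid h(k)$ and maximality of $K$) one deduces that line is already visible in an abelian subextension, contradicting $\Cl_3(k) = 1$ via a norm/transfer argument — are, up to conjugacy, the quaternion group $H_8$ and the dihedral group $D_4$ (the cyclic subgroups and the Klein four subgroup act reducibly, and $C_8$ does not embed acting irreducibly in the relevant way). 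Taking $E$ to be the fixed field of $\ker(\Gamma \to G)$ then gives $\Gal(E/k) \simeq G \in \{D_4, H_8\}$ with $\Cl_3(E) \simeq (3,3)$, as desired.

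The main obstacle I expect is the part that is glossed over above: showing that the representation is \emph{faithful enough}, i.e. that $\Cl_3(E)$ really is of type $(3,3)$ and not smaller, and that the action is genuinely irreducible rather than reducible. For the first point one argues that the natural map $\Cl_3(L) \to \Cl_3(E)$ composed with $N_{L/E}$ is multiplication by $(L:E)$, which is prime to $3$, so $\Cl_3(E)$ surjects onto $\Cl_3(L) = A$ and hence has a quotient of type $(3,3)$; combined with an upper bound (one needs $\Cl_3(E)$ to have rank exactly $2$, which follows because $E/k$ is built from the $\Gamma$-action on $A$ and any larger $3$-class group in $E$ would push more $3$-classes up into $L$ by capitulation/inflation, contradicting $\Cl_3(L) \simeq (3,3)$). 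For the irreducibility point, if $A$ had a $\Gamma$-stable line $\ell$, then $\ell$ would be fixed by $\Gamma' = \Gal(L/K)$, so $\ell \subseteq \Cl_3(K)$; but then a norm argument down to $k$ together with $3 \nmid h(k)$ forces $\ell$ to capitulate from $K$ already in a way incompatible with $\Cl_3(k) = 1$ — this is where the hypothesis $3\nmid h(k)$ and the definition of $K$ as the maximal abelian subfield are used essentially. Once irreducibility and the $(3,3)$-structure of $\Cl_3(E)$ are secured, the classification of irreducible $2$-subgroups of $\GL_2(\F_3)$ avoiding $SD_{16}$ finishes the proof.
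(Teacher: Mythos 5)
Your setup (the representation $\Phi:\Gamma\to\Aut(\Cl_3(L))\simeq \GL(2,3)$, the image landing in a $2$-Sylow subgroup $\simeq SD_{16}$, and the quotient hypothesis excluding order $16$) matches the paper. But the pivot of your argument --- reducing to \emph{irreducibility} of the action and then ``classifying irreducible $2$-subgroups of $\GL_2(\F_3)$'' --- contains a genuine error. The group $\GL_2(\F_3)$ contains a nonsplit Cartan subgroup, namely $\F_9^{\times}\simeq C_8$ acting on $\F_9\simeq\F_3^2$ by multiplication; this action is transitive on nonzero vectors, so $C_8$ and its subgroup $C_4$ act \emph{irreducibly} while being abelian. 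Your parenthetical claim that ``the cyclic subgroups and the Klein four subgroup act reducibly, and $C_8$ does not embed acting irreducibly'' is therefore false, and irreducibility alone cannot force $\im\Phi\simeq D_4$ or $H_8$. Moreover, your argument for irreducibility only derives a contradiction from a $\Gamma$-stable line fixed pointwise by $\Gamma'$; in the abelian-image case ($C_8$ or $C_4$ in the nonsplit Cartan) there is no $\Gamma$-stable line at all, so your argument never engages with these cases and they survive.

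The property you actually need is that $\im\Phi$ is \emph{nonabelian}, and this is what the paper proves: if $\im\Phi$ were abelian then $\Gamma'\subseteq\ker\Phi$, so $\Gamma'=\Gal(L/K)$ acts trivially on all of $A=\Cl_3(L)$ (not merely on a line), whence $N_{L/K}$ restricted to $A$ is raising to the power $(L:K)$, prime to $3$, and $\Cl_3(K)\supseteq N_{L/K}A\simeq A$ contradicts the class-number hypothesis. A nonabelian subgroup of $SD_{16}$ has order $8$ or $16$; order $16$ is excluded by the quotient hypothesis, and the nonabelian groups of order $8$ are exactly $D_4$ and $H_8$. This is shorter than your route and, unlike it, actually eliminates the abelian irreducible subgroups. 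Your closing concerns about whether $\Cl_3(E)$ is exactly $(3,3)$ are reasonable but secondary; the norm argument $\Cl_3(E)\supseteq N_{L/E}\Cl_3(L)=A^{(L:E)}\simeq A$ is all the paper uses, and it is the group-theoretic identification of $\im\Phi$ that your proposal fails to secure.
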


\begin{proof}
Put $A = \Cl_3(L)$; then $\Gamma$ acts on $A$, i.e. there is
a homomorphism $\Phi: \Gamma \lra \Aut(A) \simeq \GL(2,3)$. 
From $\# \GL(2,3) = 48$ and $3 \nmid \Gamma$ we conclude that
$\im \Phi$ is contained in the $2$-Sylow subgroup of $\GL(2,3)$,
which is $\simeq SD_{16}$. 

We claim that $\im \Phi$ is not abelian. In fact, assume that
it is. Then $\im \Phi \simeq \Gamma/\ker \Phi$ shows that
$\Gamma' \subseteq \ker \Phi$; hence $\Gamma'$ acts trivially
on $A$. The fixed field of $\Gamma'$ is $K$, and now
$ \Cl_3(K) \supseteq N_{L/K} \Cl_3(L) = A^{(L:K)} \simeq A$
shows that $3 \mid h(K)$ contradicting our assumptions.

Thus $\im \Phi$ is a nonabelian $2$-group, and we conclude
that $\# \im \Phi \ge 8$. On the other hand, $\Gamma$ does
not have $SD_{16}$ as a quotient, hence $\# \im \Phi = 8$,
and we have $\im \Phi \simeq D_4$ or $H_8$, since these are the 
only nonabelian groups of order $8$. Let $E$ be the fixed
field of $\ker \Phi$. Since $\ker \Phi$ acts trivially on $A$,
we see $\Cl_3(E) \supseteq N_{L/E} \Cl_3(L) = A^{(L:E)} \simeq A$
(note that $(L:E) \mid \Gamma$ is not divisible by $3$). The other
assertions are clear.
\end{proof}

Applying the proposition to the extension $k^2/k$ (we have to 
check that $\Gal(k^2/\Q)$ does not have $SD_{16}$ as a quotient.
But $\Gal(k^2/\Q)$ is a group extension
$$ 1 \lra \Gal(k^2/k^1) \lra \Gal(k^2/\Q) \lra \Gal(k^1/\Q) \lra 1$$
of an elementary abelian group $\Gal(k^1/\Q) \simeq (2,2,2,2)$
by another elementary abelian group $\Gal(k^2/k^1) \simeq (2,2)$,
from which we deduce that $\Gal(k^2/\Q)$ has exponent $4$. Now
observe that $SD_{16}$ has exponent $8$) we find that
$3 \nmid h(k^2)$ unless $k^2$ contains a normal extension
$E/\Q$ with $3$-class group of type $(3,3)$ and Galois group
isomorphic to $D_4$ or $H_8$. Let $E_0$ be the maximal abelian 
subfield of $E$; this is a $V_4$-extension of $\Q$ with quadratic
subfields $k_0$, $k_1$ and $k_2$. Let
$k_0$ denote a quadratic subfield over which $E$ is cyclic. 
If a prime ideal $\fp$ ramifies
in $E_0/k_0$, then it must ramify completely in $E/k_0$;
but since all prime ideals have ramification index $\le 2$ in $E$
(since $E \subset k^2$), this is a contradiction.

Thus $E/k_0$ is unramified. If we put $d_j = \disc k_j$, then
this happens if and only if $d_0 = d_1d_2$ and $(d_1,d_2) = 1$. 
For quaternion extensions, this is already a contradiction, 
since we conclude by symmetry that $d_1 = d_0d_2$ and 
$d_2 = d_0d_1$. Assume therefore that $\Gal(E/\Q) \simeq D_4$. 
From a result of Richter \cite{R} we know that $E_0$ can only 
be embedded into a dihedral extension if $(d_1/p_2) = (d_2/p_1) = +1$
for all $p_1\mid d_1$ and all $p_2\mid d_2$. But $E_0$
is an abelian extension contained in $k^2$, hence it is
contained in $k_\gen$, and we see that $d$ is a product
of the prime discriminants $-3$, $-4$, $5$, and $-7$. Since
no combination of these factors satisfies Richter's conditions,
$E_0$ cannot be embedded into a $D_4$-extension $E/\Q$.
This contradiction concludes our proof that $3 \nmid h(k_2)$.

\section*{Acknowledgement}
Much of Section \ref{Secl}, in particular Proposition \ref{PS},
is due to R. Schoof \cite{Sch}, whom 
I would like to thank for his help. I also thank C. Snyder
for correcting an error in Table \ref{T2}.


\begin{thebibliography}{999}

\bibitem{BLS}  E. Benjamin, F. Lemmermeyer, C. Snyder,
 {\em Real Quadratic Fields with Abelian 
	$2$-Class Field Tower}, submitted
%

\bibitem{BSS}  E. Benjamin, F. Sanborn, C. Snyder,
{\em Capitulation in Unramified Quadratic Extensions of
    Real Quadratic Number Fields},
Glasgow Math. J. {\bf 36} (1994), 385--392
%

\bibitem{Gru34} O.~Gr\"un,
{\em Aufgabe $153$}; L\"osungen von L. Holzer und A. Scholz,
 Jahresber. DMV {\bf  45} (1934), 74--75 (kursiv)
%
\bibitem{H}  F.~Hajir,
{\em  On a theorem of Koch},
 Pac. J. Math. {\bf 176} (1996), 15--18
%

\bibitem{Has} H. Hasse,
{\em Zahlbericht. Teil II}, W\"urzburg-Wien 1970
%

\bibitem{LemK} F. Lemmermeyer,
{\em Kuroda's Class Number Formula},
Acta Arith. {\bf 66.3} (1994), 245--260.  
%

\bibitem{LemAA} F.~Lemmermeyer,
{\em Ideal class groups of cyclotomic number fields I},
Acta Arith. {\bf 72} (1995), 347--359
%

\bibitem{Lem} F.~Lemmermeyer,
{\em Unramified quaternion extensions of quadratic number fields},
J. Th\'eor. Nombres Bordeaux {\bf 9} (1997), 51--68 
%

\bibitem{Mar} J. Martinet,
{\em  Petits discriminants des corps de nombres},
Journ\'ees Arithm\'etiques 1980 (J.~V.~Armitage, ed.),
Cambridge Univ. Press 1982, 151--193, 
%

\bibitem{Poi}  G.~Poitou,
{\em  Minorations de discriminants},
 S\'em. Bourbaki (1975/76) {\bf 479} (1977),  136--153
%

\bibitem{R} H.~Richter,
{\em \"Uber die L\"osbarkeit einiger nicht-Abelscher 
	Einbettungsprobleme},
Math. Annalen {\bf 112} (1936), 69--84
%

\bibitem{Sch} R. Schoof,
{\em Letter from Nov. 13, 1992}
%

\bibitem{Yam} K.~Yamamura,
{\em Maximal unramified extensions of imaginary quadratic 
	number fields of small conductors},
preprint 1996
%

\end{thebibliography}
\end{document}